\shorttitle{Random Coin Tossing} 
\begin{document}

\title{Random Coin Tossing with Unknown Bias} 

\authorone[Universidade de S\~ao Paulo]{Diego Marcondes} 
\authortwo[Universidade de S\~ao Paulo]{Cl\'audia Peixoto}

\addressone{Instituto de Matemática e Estat\'istica\\
Universidade de S\~ao Paulo\\
Rua do Mat\~ao, 1010, 05508-090, S\~ao Paulo, Brazil} 

\begin{abstract}
Consider a coin tossing experiment which consists of tossing one of two coins at a time, according to a renewal process. The first coin is fair and the second has probability $1/2 + \theta$, $\theta \in [-1/2,1/2]$, $\theta$ unknown but fixed, of head. The biased coin is tossed at the renewal times of the process, and the fair one at all the other times. The main question about this experiment is whether or not it is possible to determine $\theta$ almost surely as the number of tosses increases, given only the probabilities of the renewal process and the observed sequence of heads and tails. We will construct a confidence interval for $\theta$ and determine conditions on the process for its almost sure convergence. It will be shown that recurrence is in fact a necessary condition for the almost sure convergence of the interval, although the convergence still holds if the process is null recurrent but the expected number of renewals up to and including time $N$ is $O(N^{1/2+\alpha}), 0 \leq \alpha < 1/2$. It solves an open problem presented by Harris and Keane (1997). We also generalize this experiment for random variables on $L^{2}$ which are sampled according to a renewal process from either one of two distributions.
\end{abstract}

\keywords{Random Coin Tossing; Random Walk on scenery} 

\ams{60G50}{60K35} 

\section{Introduction}
Consider a sequence of heads and tails originated from a coin tossing experiment which consists of tossing one of two coins at a given time, according to a renewal process. The first coin is fair and the second has probability $1/2 + \theta$, $\theta \in [-1/2,1/2]$, $\theta$ unknown but fixed, of head. The biased coin is tossed at the renewal times of the process, and the fair one at all the other times. The main question about this experiment is whether or not it is possible to determine $\theta$ almost surely as the number of tosses increases, given only the probabilities of the renewal process and the observed sequence of heads and tails.

The work of \cite{RCT} treated a slightly different problem and left a question related to the above as an open problem. In their approach, an extension of a Random Walk on scenery model, the second coin is \textit{known} to be either fair or have a bias $\lambda$, i.e., $\mathbb{P}\{heads\} = 1/2(1+\lambda), \lambda \in [0,1]$. A method to determine if the second coin was fair or had a known bias $\lambda$ was developed based on the relationship between two convenient measures and conditions for the possibility of such determination were given. \cite{LEVIN} solved the main open problem of their paper and \cite{GEN1} and \cite{GEN2} generalized their problem for the case in which the bias of the second coin may differ from time to time. 

The case in which $\lambda$ is unknown was also treated by \cite{RCT}, but only for renewal times given by the return to the origin of a simple random walk on $\mathbb{Z}$. We will extend their method to general renewal processes, giving a sufficient condition for the almost sure determination of $\theta$, which is the same that allowed them to prove the possibility of such determination on the case in which $\lambda$ is unknown and the returns are given by a random walk on $\mathbb{Z}$.

This paper has three main objectives: to construct a confidence interval for $\theta$, to determine for which renewal processes it converges almost surely to $\theta$ and to generalize it for a class of random variables, as follows.

Suppose two random variables on $L^{2}$ with means $M$ and $M + \Delta$, respectively, in which $M$ is known and $\Delta \in \mathbb{R}$ is unknown, but fixed. The second random variable is sampled on renewal times of a renewal process, while the first one is sampled at all the other times. Again, one may ask if it is possible to determine $\Delta$ almost surely as the sample size increases, given only the sample of the random variables and the probabilities of the renewal process. On Section \ref{delta} we will construct a confidence interval for $\Delta$ similar to the one for $\theta$ and determine a sufficient condition on the renewal process for its almost sure convergence.

As an heuristic for what kind of renewal process it is possible to determine $\theta$ or $\Delta$ almost surely, we argue that the renewals must happen often for, otherwise, there will not be enough information about $\theta$ or $\Delta$ to differentiate it from random noise. Indeed, as more often the renewals, smaller will be the confidence intervals for $\theta$ and $\Delta$. In fact, recurrence is a necessary condition for the almost sure convergence of the interval. It will be shown that if the process is positive recurrent, then $\theta$ and $\Delta$ may be determined almost surely on the scenario described above. 

Nevertheless, if the process is null recurrent, but the expected number of renewals up to and including time $N$ is $O(N^{1/2+\alpha}), 0 \leq \alpha < 1/2$, it is also possible to determine $\theta$ and $\Delta$ almost surely, when $N$ diverges. We conclude then that a $O(N^{1/2+\alpha}), 0 \leq \alpha \leq 1/2$, expected number of renewals on the first $N$ times is a sufficient condition for the almost sure convergence of the confidence interval, for case $\alpha = 1/2$ is equivalent to the positive recurrence condition. This is the same condition used to prove Theorem 2 of \cite{RCT}. 

On Section \ref{notation} the problems above will be formally defined and some useful results will be presented. We then develop in Section \ref{theta} the confidence intervals for $\theta$ and $\Delta$ in a similar fashion, giving conditions for their almost sure convergence. On the last section we apply the obtained results to some renewal processes.

\section{Notation}
\label{notation}

Let $X_{n}, n = 1,\dots,N$, be independent random variables taking values in $A=\{0,1\}$ with probabilities $\mathbb{P}\{X_{n} = 1\} = \mathbb{P}\{X_{n} = 0\} = 1/2$. Analogously, let $X'_{n}, n =1,\dots,N$, be independent random variables taking values in $A$ with probabilities $\mathbb{P}\{X'_{n} = 1\} = 1- \mathbb{P}\{X'_{n} = 0\} = 1/2 +\theta, \theta \in [-1/2,1/2]$, $\theta$ unknown but fixed. We assume that the random variables $\{X_{1},\dots,X_{N},X'_{1},\dots,X'_{N}\}$ are independent. In the scheme of the Introduction, $\{X_{n}\}$ represent the tosses of the fair coin and $\{X'_{n}\}$ the tosses of the biased one.

Using a notation similar to \cite[Chapter~13]{Feller}, let $\xi$ be the event that represents a renewal and $\delta_{n} = \mathds{1}\{\xi \text{ happens at time n}\}, n \geq 1$, in which $\mathds{1}\{\cdot\}$ is the indicator function, and denote $\delta_{1}^{N} = \{\delta_{1},\dots,\delta_{N}\}$. Define $T = \min \{n\geq 1: \delta_{n} = 1\}$; denote $T = \infty$ if $\delta_{n} = 0, \forall n \geq 1$; and let $\mu = E(T)$ be the mean recurrence time of the process. We assume $\{\delta_{n}\}$ independent of $\{X_{n}\}$ and $\{X'_{n}\}$.

The probability distribution of the first renewal time is given by $\boldsymbol{f} = (f_{1},f_{2},\dots)$ in which $f_{n} = \mathbb{P}\{T = n\}, \forall n \geq 1$. Now, define the probability of having a renewal at time $n$ as $u_{n} = \mathbb{P}\{\delta_{n} = 1\}, \forall n \geq 1$, and denote $\boldsymbol{u} = (u_{1},u_{2},\dots)$. We then have the well-known recurrence relations given by
\begin{equation*}
\begin{cases}
u_{0} \coloneqq 1 \\
u_{n} = \sum\limits_{k=1}^{n} f_{k} u_{n-k}.
\end{cases}
\end{equation*}
Finally, denote the expected number of renewals up to and including time $N$ as $U_{N} \coloneqq \sum\limits_{n=1}^{N} u_{n}$. 

The renewal process associated with the experiment is completely defined by either the probability distribution $\boldsymbol{f}$ of $T$ or the probabilities $\boldsymbol{u}$ of having a renewal at time $n, \forall n\geq 1$. Therefore, it will be supposed that one of them is known.

The random variables that are actually sampled may be defined as
\begin{align*}
Y_{n} = \begin{cases}
X_{n}, & \text{if } \delta_{n} = 0 \\
X'_{n}, & \text{if } \delta_{n} = 1 \\
\end{cases}, & & n = 1, \dots, N.
\end{align*}
Note that $\{Y_{1}, \dots, Y_{N}\}$ are independent. Furthermore, the expectation of $\bar{Y}_{N} \coloneqq \sum_{n=1}^{N} Y_{n}/N$ is given by
\begin{equation}
E(\bar{Y}_{N}) = E(E(\bar{Y}_{N}|\delta_{1}^{N})) = \sum\limits_{n=1}^{N} \frac{1}{N} \Bigg[ \Big(\frac{1}{2}+ \theta\Big) u_{n} + \frac{1}{2} (1 - u_{n}) \Bigg] = \theta \frac{U_{N}}{N} + \frac{1}{2}.
\end{equation}

Using the notation above, the main problem of this paper may be stated as \textit{to determine $\theta$ almost surely when $N$ diverges, given only the sample $\{Y_{n}\}$ and either $\boldsymbol{f}$ or $\boldsymbol{u}$.}

A slightly different problem may also be treated in the same manner. Let $W_{n}, n = 1,\dots,N$, be independent and identically distributed random variables on $L^{2}$ with known mean $E(W_{1}) = M$ and variance $\sigma^{2}_{W} > 0$. Also, let $W'_{n}, n = 1,\dots,N$, be independent and identically distributed random variables on $L^{2}$ with mean $E(W'_{1}) = M + \Delta$, $\Delta \in \mathbb{R}$ unknown but fixed, and the same variance $\sigma_{W}^{2} > 0$. The variance $\sigma_{W}^{2}$ is supposed known.

Analogously, we define
\begin{align}
V_{n} = \begin{cases}
W_{n}, & \text{if } \delta_{n} = 0 \\
W'_{n}, & \text{if } \delta_{n} = 1 \\
\end{cases}, & & n = 1, \dots, N
\end{align}
and engage in determining $\Delta$ almost surely when $N$ diverges, given only $\{V_{n}\}$ and either $\boldsymbol{f}$ or $\boldsymbol{u}$. Note that the expectation of $\bar{V}_{N} \coloneqq \sum_{n=1}^{N} V_{n}/N$ is given by
\begin{equation}
E(\bar{V}_{N}) = E(E(\bar{V}_{N}|\delta_{1}^{N})) = \sum\limits_{n=1}^{N} \frac{1}{N} \Bigg[ (M + \Delta) u_{n} + M (1 - u_{n}) \Bigg] = \Delta \frac{U_{N}}{N} + M.
\end{equation}

The main result to be used in this paper is Hoeffding's Inequality for Bounded Random Variables, as stated below.

\textbf{Hoeffding's Inequality}: If $\{Y_{1},\dots, Y_{N}\}$ are independent random variables and $a_{n} \leq Y_{n} \leq b_{n}, n=1, \dots, N$, then for $\epsilon > 0$
\begin{equation}
\label{HOEG}
\mathbb{P}\Big\{|\bar{Y}_{N} - E(\bar{Y}_{N})| \geq \epsilon\Big\} \leq 2 \exp\Bigg\{\frac{-2N^{2}\epsilon^{2}}{\sum_{n=1}^{N}(b_{n} - a_{n})^{2}}\Bigg\}.
\end{equation}
Also, if $\{Y_{1},\dots, Y_{N}\}$ take values on $\{0,1\}$, (\ref{HOEG}) reduces to
\begin{equation}
\label{HOEB}
\mathbb{P}\Big\{|\bar{Y}_{N} - E(\bar{Y}_{N})| \geq \epsilon\Big\} \leq 2 \exp\{-2N\epsilon^{2}\}.
\end{equation}
A proof for (\ref{HOEG}) is given in \cite{HOE}.

Note that the bound in (\ref{HOEG}) does not depend on the variance or mean of $Y_{n}$. Analogously, the bound on (\ref{HOEB}) does not depend on $\mathbb{P}\{Y_{n} = 1\}$ that may also be unknown. Therefore, those bounds are quite useful in practice, for those quantities are often unknown. In fact, the bound in (\ref{HOEB}) is the best that can be obtained under these conditions.

\section{$\theta$ almost surely}
\label{theta}

In this section we will deduce a confidence interval for $\theta$ and give conditions for its almost sure convergence. First, suppose that it is known that, for $\epsilon > 0$ and $0 < \gamma < 1$,
\begin{equation}
\mathbb{P}\{|\bar{Y}_{N} - E(\bar{Y}_{N})| < \epsilon\} \geq \gamma.
\label{DIF}
\end{equation}
Therefore, as
\begin{align*}
-\epsilon < \bar{Y}_{N} - E(\bar{Y}_{N}) < \epsilon \iff & \frac{-N\epsilon + N \bar{Y}_{N} - N/2 }{U_{N}} < \theta < \frac{N\epsilon + N \bar{Y}_{N} - N/2 }{U_{N}}
\end{align*}
an interval of confidence $\gamma$ for $\theta$ is given by
\begin{equation*}
CI(\theta;\gamma) = \Bigg[\frac{N}{U_{N}}\Big(-\epsilon + \bar{Y}_{N} - \frac{1}{2}\Big);\frac{N}{U_{N}}\Big(\epsilon + \bar{Y}_{N} - \frac{1}{2}\Big)\Bigg] \coloneqq \Big[L_{I}^{N};L_{S}^{N}\Big]
\end{equation*}
in which $\epsilon$ is a function of $\gamma$.

The next step is to determine $\epsilon$ given a confidence $\gamma$, i.e., write $\epsilon$ explicitly as a function of $\gamma$. A first idea is to apply the Chebyshev's Inequality and get that
\begin{equation*}
\mathbb{P}\{|\bar{Y}_{N} - E(\bar{Y}_{N})| < \epsilon\} \geq 1 - \frac{Var(\bar{Y}_{N})}{\epsilon^{2}} \geq 1 - \frac{1}{4N\epsilon^{2}} = \gamma \! \! \! \implies \! \! \! \epsilon_{c} \coloneqq (4N(1-\gamma))^{-1/2}.
\end{equation*}
As $Var(\bar{Y}_{N})$ depends on $\theta$, which is unknown, the $1/4N$ bound may be used, although it might not be a tight bound, especially for values of $\theta$ far from zero. 

A better bound for the probability (\ref{DIF}) is given by Hoeffding's Inequality (\ref{HOEB}) and, then,
\begin{equation*}
\mathbb{P}\{|\bar{Y}_{N} - E(\bar{Y}_{N})| < \epsilon\} \geq 1 - 2e^{-2N\epsilon^{2}} = \gamma \implies \epsilon_{h} \coloneqq \Bigg(2N\ln\Big(\frac{1-\gamma}{2}\Big)\Bigg)^{-1/2}.
\end{equation*}
The confidence interval given by $\epsilon_{h}$ is smaller than the one given by $\epsilon_{c}$. In fact, the interval given by $\epsilon_{h}$ is the smallest one in this scenario, i.e., when $\theta$ is unknown and no asymptotic distributions are assumed. Therefore, the best $\epsilon$ we are able to get is $O(N^{-1/2})$.

A third interval is given by the asymptotic normal approximation of the distribution of $\bar{Y}_{N}$ given by the Lindberg-Feller Central Limit Theorem \cite[page~128]{rao}, applying again the $1/4N$ bound for its variance, as
\begin{equation*}
\mathbb{P}\Bigg\{\Big|\bar{Y}_{N} - E(\bar{Y}_{N})\Big| < \Phi^{-1}\Big(\frac{1-\gamma}{2}\Big)\sqrt{\frac{1}{4N}}\Bigg\} \geq \gamma \implies \epsilon_{a} = \Phi^{-1}\Big(\frac{1-\gamma}{2}\Big)\sqrt{\frac{1}{4N}}
\end{equation*}
for great values of $N$, in which $\Phi(\cdot)$ is the cumulative distribution function of the standard normal distribution. The interval given by $\epsilon_{a}$ is better than the above, although it may used only for great values of $N$. When $N$ diverges, the intervals given by $\epsilon_{c}, \epsilon_{h}$ and $\epsilon_{a}$ are equivalent. Many other bounds may be applied to (\ref{DIF}) in order to get $\epsilon$, e.g., see the ones given by \cite{Bennet}, \cite{Bernstein} and \cite{Ngo}. 

We now engage in finding conditions for the almost sure convergence of the confidence interval to $\theta$. The theorem below gives a sufficient condition on $U_{N}$ for the almost sure convergence of the interval to $\theta + k$, in which $k$ is a known constant, and, consequently, the almost sure determination of $\theta$.

\begin{theorem}
	\label{theorem}
	Let k be a known constant. If $U_{N} = O(N^{1/2+\alpha}), 0 \leq \alpha \leq 1/2$, then $\lim\limits_{N \rightarrow \infty} L_{S}^{N} = \lim\limits_{N \rightarrow \infty} L_{I}^{N} = \theta + k$ almost surely.
\end{theorem}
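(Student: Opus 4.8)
The plan is to peel off the exact deterministic mean and reduce everything to a single fluctuation term that is then controlled by Hoeffding's inequality together with a Borel--Cantelli argument. Writing $D_N \coloneqq \bar{Y}_N - E(\bar{Y}_N)$ and using $E(\bar{Y}_N) = \theta U_N/N + 1/2$, I would first record the exact identities $L_S^N = \theta + \frac{N}{U_N}\epsilon + \frac{N}{U_N}D_N$ and $L_I^N = \theta - \frac{N}{U_N}\epsilon + \frac{N}{U_N}D_N$, so that the two endpoints share the common stochastic term $\frac{N}{U_N}D_N$ and differ only by the deterministic half-width $\frac{N}{U_N}\epsilon$. The target $\theta + k$ is then reached by showing that the stochastic term vanishes almost surely and that the deterministic shift converges to a known value (which, in the basic construction, is $0$, giving $k=0$; a genuinely known additive constant in any reparametrised variant passes through unchanged and is absorbed into $k$, so that subtracting $k$ isolates $\theta$).

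For the deterministic half-width, recall that the smallest $\epsilon$ obtainable here is $\epsilon_h = O(N^{-1/2})$, whence $\frac{N}{U_N}\epsilon = O(N^{1/2}/U_N)$. Here I would invoke the hypothesis that $U_N$ is of order $N^{1/2+\alpha}$: this makes $\frac{N}{U_N}\epsilon = O(N^{-\alpha})$, which tends to $0$ for $\alpha > 0$ and forces the two endpoints to a common limit. I would note explicitly that this step needs the order-of-magnitude (lower) growth $U_N \gtrsim N^{1/2+\alpha}$, not merely an upper bound, since a slowly growing $U_N$ would keep the interval from collapsing.

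The heart of the proof is the almost sure statement $\frac{N}{U_N}D_N \to 0$. For fixed $\eta > 0$, Hoeffding's inequality (\ref{HOEB}) applied to the $\{0,1\}$-valued $Y_n$ gives $\mathbb{P}\{\frac{N}{U_N}|D_N| \geq \eta\} = \mathbb{P}\{|D_N| \geq \eta U_N/N\} \leq 2\exp\{-2\eta^2 U_N^2/N\}$. With $U_N$ of order $N^{1/2+\alpha}$ the quantity $U_N^2/N$ is of order $N^{2\alpha}$, so the bound is of order $\exp\{-c_\eta N^{2\alpha}\}$ and the series $\sum_N \mathbb{P}\{\frac{N}{U_N}|D_N| \geq \eta\}$ converges whenever $\alpha > 0$. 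Borel--Cantelli then yields $\frac{N}{U_N}|D_N| < \eta$ for all large $N$ almost surely; intersecting these full-measure events over a sequence $\eta_m \downarrow 0$ gives $\frac{N}{U_N}D_N \to 0$ almost surely, and combining with the deterministic step completes the argument.

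I expect the main obstacle to be the near--null-recurrent regime in which $\alpha$ is close to $0$: there $U_N$ grows barely faster than $N^{1/2}$, the deterministic half-width $O(N^{-\alpha})$ decays very slowly, and the Borel--Cantelli series $\sum_N \exp\{-c_\eta N^{2\alpha}\}$ converges only marginally, so the summability must be argued with care. The genuine boundary $\alpha = 0$ (the simple-random-walk-type case) sits exactly at the threshold where this scheme degenerates and the finer analysis behind Theorem~2 of \cite{RCT} is required.
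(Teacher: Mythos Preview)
Your argument is correct and is in fact more careful than the paper's, but it proceeds along a different line. The paper's proof simply invokes Kolmogorov's Strong Law to replace $\bar{Y}_N$ by $E(\bar{Y}_N)$ and then computes
\[
\lim_{N\to\infty}\frac{N}{U_N}\Big[\theta\,\frac{U_N}{N}+O(N^{-1/2})\Big]=\theta+\lim_{N\to\infty}\frac{O(N^{1/2})}{U_N}\eqqcolon\theta+k,
\]
with both the $\pm\epsilon$ half-width and the fluctuation swept into the $O(N^{-1/2})$ symbol, and $k$ declared to be whatever that last limit is (nonzero precisely at $\alpha=0$). Your route instead isolates the stochastic piece $\tfrac{N}{U_N}D_N$ and kills it via Hoeffding plus Borel--Cantelli. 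This buys you two things the paper's argument leaves implicit: first, it makes explicit that multiplying the vanishing $D_N$ by the diverging factor $N/U_N$ is legitimate (the bare SLLN statement $D_N\to 0$ does not by itself give $\tfrac{N}{U_N}D_N\to 0$ when $\alpha<\tfrac12$); second, it exposes that the hypothesis must be read as a \emph{lower} growth bound $U_N\gtrsim N^{1/2+\alpha}$, not merely the big-$O$ upper bound in the paper's statement. The trade-off is at the boundary $\alpha=0$: your Borel--Cantelli series no longer converges and the deterministic half-width does not vanish, so the endpoints do not coalesce; the paper handles this case not by sharper analysis but by allowing the residual $\lim O(N^{1/2})/U_N$ to sit inside the known constant $k$ rather than forcing it to zero.
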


\begin{proof}
	By the Kolmogorov's Strong Law of Large Numbers \cite[page~114]{rao}, we have that
	\begin{equation*}
	\bar{Y}_{N} \xrightarrow[N \rightarrow \infty]{a.s} \lim\limits_{N \rightarrow \infty} E(\bar{Y}_{N}) = \lim\limits_{N \rightarrow \infty} \theta \frac{U_{N}}{N} + \frac{1}{2}.
	\end{equation*}
	Therefore, the almost sure limit of the interval is given by
	\begin{align*}
	\lim\limits_{N \rightarrow \infty} L_{S}^{N} = \lim\limits_{N \rightarrow \infty} L_{I}^{N} = & \lim\limits_{N \rightarrow \infty} \frac{N}{U_{N}} \Bigg[E(\bar{Y}_{N}) - \frac{1}{2} + O(N^{-1/2})\Bigg] \\ = & \lim\limits_{N \rightarrow \infty} \frac{N}{U_{N}} \Bigg[\theta \frac{U_{N}}{N} + O(N^{-1/2})\Bigg] = \theta + \lim\limits_{N \rightarrow \infty} \frac{O(N^{1/2})}{U_{N}}
	\end{align*}
	in which the $O(N^{-1/2})$ element is one of the $\epsilon$ presented above, for a fixed confidence $\gamma$. Thus, denoting $\lim\limits_{N \rightarrow \infty} \frac{O(N^{1/2})}{U_{N}} = k$, the result follows.
\end{proof}

Note that $k = \lim\limits_{N \rightarrow \infty} \frac{O(N^{1/2})}{U_{N}}$ is known, for \textbf{u} is known. Therefore, subtracting $k$ from each side of the confidence interval, we may determine $\theta$ almost surely when $N$ diverges, as desired. In fact, $k = 0$ if $U_{N} = O(N^{1/2+\alpha}), 0 < \alpha \leq 1/2$, and is positive, yet known, if $U_{N} = O(N^{1/2})$.

On the one hand, if the process is positive recurrent then $\lim\limits_{N \rightarrow \infty} \frac{N}{U_{N}} = \mu$ which implies that $U_{N} = O(N)$ and the convergence holds. On the other hand, if the process is null recurrent then $\lim\limits_{N \rightarrow \infty} \frac{N}{U_{N}} = \infty$, which implies that $U_{N}  = O(N^{1/2+\alpha})$, for some $-1/2 < \alpha < 1/2$. Hence, the convergence holds for null recurrent processes, but only if $0 \leq \alpha < 1/2$. If the process is transient, then $\lim\limits_{N \rightarrow \infty} U_{N} < \infty$ \cite[Theorem~1(a)]{Feller2} and the interval does not converge. It proves that recurrence is a necessary condition for the convergence of the interval.

The convergence of the interval to $\theta$ for null recurrent processes is quite interesting, for one's intuition could wrongfully infer that $\mu < \infty$ is a necessary condition for the convergence, as it is expected that the renewals must happen often enough for the convergence to hold.

\section{Extension to random variables on $L^{2}$}
\label{delta}

The confidence interval established above may be immediately extended to random variables on $L^{2}$ in the following way. Suppose that, for $\epsilon > 0$ and $0 < \gamma < 1$,
\begin{equation}
\mathbb{P}\{|\bar{V}_{N} - E(\bar{V}_{N})| < \epsilon\} \geq \gamma.
\label{DIFDELTA}
\end{equation}
Therefore, as
\begin{align*}
-\epsilon < \bar{V}_{N} - E(\bar{V}_{N}) < \epsilon \iff & \frac{-N\epsilon + N \bar{V}_{N} - NM }{U_{N}} < \Delta < \frac{N\epsilon + N \bar{V}_{N} - NM }{U_{N}}
\end{align*}
and an interval of confidence $\gamma$ for $\Delta$ is given by
\begin{equation*}
CI(\Delta;\gamma) = \Bigg[\frac{N}{U_{N}}\Big(-\epsilon + \bar{V}_{N} - M\Big);\frac{N}{U_{N}}\Big(\epsilon + \bar{Y}_{N} - M\Big)\Bigg] \coloneqq \Big[L_{I}^{N};L_{S}^{N}\Big] 
\end{equation*}
in which $\epsilon$ is function of $\gamma$.

Again, it is necessary to write $\epsilon$ in function of $\gamma$. On the one hand, if the random variables $\{V_{n}\}$ are bounded, we may apply the Hoeffding's Inequality (\ref{HOEG}) to get $\epsilon$. On the other hand, if they are not bounded, we may apply the Chebysev's Inequality or the asymptotic normal approximation given by the Lindberg-Feller Central Limit Theorem to get $\epsilon$ in function of the variance of $W_{t}$ and $W'_{t}$, which is known. Either way, $\epsilon$ will be $O(N^{1/2})$ and we have the corollary below, which the proof is similar to that of Theorem \ref{theorem}.

\begin{corollary}
	Let k be a known constant. If $\mu = O(N^{1/2+\alpha}), 0 \leq \alpha \leq 1$, then $\lim\limits_{N \rightarrow \infty} L_{S}^{N} = \lim\limits_{N \rightarrow \infty} L_{I}^{N} = \Delta + k$ almost surely.
	\label{cor}
\end{corollary}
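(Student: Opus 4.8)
The plan is to reproduce the proof of Theorem \ref{theorem} line by line, the only genuinely new ingredient being the justification of the strong law of large numbers: here the summands $\{V_n\}$ are unbounded elements of $L^{2}$ rather than the bounded $\{0,1\}$-valued $\{Y_n\}$ of Theorem \ref{theorem}, so Kolmogorov's law is no longer immediate. First I would bound the variances of the $V_n$ uniformly. By the law of total variance, conditioning on $\delta_n$ gives $\mathrm{Var}(V_n)=\sigma_W^{2}+u_n(1-u_n)\Delta^{2}\le\sigma_W^{2}+\Delta^{2}/4=:C<\infty$, since both $W_n$ and $W'_n$ have variance $\sigma_W^{2}$ and means differing by $\Delta$. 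Because the $\{V_n\}$ are independent (by the same argument that gave independence of $\{Y_n\}$), this uniform bound yields $\sum_{n\ge1}\mathrm{Var}(V_n)/n^{2}\le C\sum_{n\ge1}n^{-2}<\infty$, which is precisely the hypothesis of Kolmogorov's Strong Law of Large Numbers \cite[page~114]{rao}; hence $\bar{V}_N-E(\bar{V}_N)\to0$ almost surely.

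With this in hand the remaining steps are purely algebraic. I would substitute $\bar{V}_N=E(\bar{V}_N)+o(1)$ almost surely into the endpoints $L_I^N$ and $L_S^N$, take for $\epsilon$ any of the $O(N^{-1/2})$ choices available here — from Hoeffding's inequality (\ref{HOEG}) when $\{V_n\}$ is bounded, or from Chebyshev's inequality or the Lindberg--Feller approximation expressed through the known $\sigma_W^{2}$ otherwise — and use the identity $E(\bar{V}_N)-M=\Delta\,U_N/N$. This gives, almost surely,
\begin{equation*}
\lim_{N\to\infty}L_S^N=\lim_{N\to\infty}L_I^N=\lim_{N\to\infty}\frac{N}{U_N}\Big[\Delta\frac{U_N}{N}+O(N^{-1/2})\Big]=\Delta+\lim_{N\to\infty}\frac{O(N^{1/2})}{U_N},
\end{equation*}
and setting $k\coloneqq\lim_{N\to\infty}O(N^{1/2})/U_N$ completes the argument. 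As in Theorem \ref{theorem}, $k$ is finite and computable because $\boldsymbol{u}$, and hence $U_N$, is known and grows like $N^{1/2+\alpha}$; in particular $k=0$ when $\alpha>0$ and $k$ is a known positive constant when $U_N=O(N^{1/2})$.

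The main obstacle is therefore concentrated in the first paragraph: unlike Theorem \ref{theorem}, where boundedness made the law of large numbers automatic, one must verify the Kolmogorov summability condition, and the decisive observation is that the common variance $\sigma_W^{2}$ together with the fixed shift $\Delta$ forces $\mathrm{Var}(V_n)$ to be bounded uniformly in $n$, regardless of the renewal probabilities $u_n$. I would also flag that for the conclusion to read exactly as in Theorem \ref{theorem} the hypothesis should be stated as $U_N=O(N^{1/2+\alpha})$ with $0\le\alpha\le1/2$ — matching the theorem and the discussion following it, and consistent with $U_N\le N$ — rather than as a growth condition on the fixed constant $\mu$; once phrased in this form, the argument above yields the interval's almost sure convergence to $\Delta+k$.
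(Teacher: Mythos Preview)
Your proof is correct and follows exactly the approach the paper indicates (it says only that ``the proof is similar to that of Theorem~\ref{theorem}''); the uniform variance bound $\mathrm{Var}(V_n)\le\sigma_W^{2}+\Delta^{2}/4$ via the law of total variance is precisely the extra check needed to invoke Kolmogorov's Strong Law in the unbounded $L^{2}$ setting, and the remaining algebra is identical to Theorem~\ref{theorem}. Your observation that the hypothesis should read $U_N=O(N^{1/2+\alpha})$ with $0\le\alpha\le1/2$, rather than a growth condition on the constant $\mu$, is also correct --- this is evidently a typo in the corollary's statement.
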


\section{Applications}
\label{apliccations}

If the renewals are given by the returns to the origin of a simple random walk on $\mathbb{Z}$, then $U_{N} = O(N^{1/2})$ and, by Theorem \ref{theorem}, it is possible to determine $\theta$ almost surely as $N$ diverges. Note that it is in accordance with \cite[Theorem~2]{RCT}. However, if the renewal times are given by the returns to the origin of a simple random walk on $\mathbb{Z}^{2}$, then the condition of Theorem \ref{theorem} is not satisfied and $\theta$ cannot be determined applying this method. If the renewals are given by simple random walks on $\mathbb{Z}^{d}, d \geq 3$, again it is not possible to determine $\theta$ almost surely on our approach, for the process is transient.

As commented by \cite{RCT}, the condition on the process which allows the almost sure determination of $\theta$ when it is known, i.e., the condition $\sum_{n=1}^{\infty} u_{n}^{2} = \infty$, does not imply nor is implied by the condition of Theorem \ref{theorem}. Indeed, we are treating a different problem from that of \cite{RCT}, which is closer related to the theory of statistics than to measure theory, for we are interested in estimating an unknown parameter when the simple size diverges.

\section{Final Remarks}

Our approach does not prove that the condition of Theorem \ref{theorem} is a necessary one for the almost sure determination of $\theta$, however it is expected that there must be a critical point $\alpha$ such that $U_{N}$ must be at least $O(N^{1/2 + \alpha})$ for the convergence to hold. This point might actually be $\alpha = 0$, as the $N^{1/2}$ convergence rate appears intrinsically in a lot of scenarios all over probability theory. As a matter of fact, the $O(N^{1/2})$ of $U_{N}$ of the process given by the random walk on $\mathbb{Z}$ is what allowed \cite{RCT} to prove its Theorem 2. 

The method above may also be seen from a statistical point of view, in which one is interested in estimating $\theta$ or $\Delta$. The model on $\Delta$ might be of use for statistical quality control, for the random variables presented may be seen as the behaviour of a machine that, when working properly, performs a job that is measured by a random variable with mean $M$. Although, it is known that it may not work properly from time to time, according to a renewal process, and when it does not work properly, it performs its job according to a different random variable with mean $M + \Delta$. In most applications, those random variables may be considered bounded and, if not, are hardly of infinite variance.

The model on $\theta$ and $\Delta$ may be also extended for the case in which $\theta$ and $\Delta$ differ from time to time or the probability of head of the first coin is $p \in (0,1)$. This is an interesting topic for further research, along with the establishment of a necessary condition for the possibility of determining $\theta$ almost surely when it is unknown.

\acks
We would like to thank A. Simonis who presented the problem to us and L. G. Esteves for his careful reading of the manuscripts.

%
%
%
%

\end{document}